\documentclass[12pt,a4paper]{article}%
\usepackage[utf8]{inputenc}
\usepackage{hyperref}
\usepackage{amsmath}
\usepackage{amsfonts}
\usepackage{amssymb}
\usepackage{xcolor}
\usepackage{graphicx}%
\providecommand{\U}[1]{\protect\rule{.1in}{.1in}}
\newtheorem{theorem}{Theorem}
\newtheorem{conjecture}[theorem]{Conjecture}
\newtheorem{corollary}[theorem]{Corollary}
\newtheorem{lemma}[theorem]{Lemma}
\newtheorem{observation}[theorem]{Observation}

\newtheorem{proposition}[theorem]{Proposition}
\newenvironment{proof}[1][Proof]{\noindent\textbf{#1.} }{\ \hfill \rule{0.5em}{0.5em}\bigskip}
\graphicspath{{Slike/}}

\begin{document}

\title{The rich-neighbor chromatic index is linear in the maximum degree}
\author{Masaki Kashima$^{1}$, Mirko Petru\v{s}evski$^{2}$, Jelena Sedlar$^{3,5}$,\\Riste \v{S}krekovski$^{4,5,6}$\\[0.3cm] {\small $^{1}$ \textit{Keio University, FST, Yokohama, Japan}}\\[0.1cm] {\small $^{2}$ \textit{Ss. Cyril and Methodius University, FME,
Skopje, North Macedonia}}\\[0.1cm] {\small $^{3}$ \textit{University of Split, FGAG, Split, Croatia}}\\[0.1cm] {\small $^{4}$ \textit{University of Ljubljana, FMF, Ljubljana,
Slovenia}}\\[0.1cm] {\small $^{5}$ \textit{University of Novo Mesto, FIS, Novo Mesto,
Slovenia}}\\[0.1cm] {\small $^{6}$ \textit{Rudolfovo -- Science and Technology Centre,
Novo Mesto, Slovenia}}}
\date{}
\maketitle

\begin{abstract}
An edge of a multigraph is \emph{rich} with respect to a proper edge-coloring
if all the edges adjacent to it receive mutually distinct colors, and a proper
edge-coloring is a \emph{rich-neighbor coloring} if every non-isolated edge
has a rich edge adjacent to it. The least number of colors in such a coloring
is the rich-neighbor chromatic index $\chi_{\mathrm{rn}}^{\prime}(G)$. So far
it was studied for subcubic graphs only, and multigraphs of larger maximum
degree are considered here for the first time. The only upper bound valid in
general was the quadratic one inherited from the strong chromatic index, while
a single rich edge already requires $2\Delta-1$ colors in a $\Delta$-regular
graph. We give an upper linear bound $\chi_{\mathrm{rn}}^{\prime}%
(G)\leq6\Delta-8$ which holds for every loopless multigraph $G$ with
$\Delta=\Delta(G)\geq3,$ and we show that a coloring realizing this bound can
be found in polynomial time. Consequently $\chi_{\mathrm{rn}}^{\prime
}(G)=\Theta(\Delta)$. For $\Delta$-regular graphs with $\Delta\geq3$ the lower
bound is $2\Delta-1,$ and the upper bound is $5\Delta-8$. The bound
$5\Delta-8$ holds more generally for every multigraph with $\Delta\geq3$ which
has a dominating $P_{\geq3}$-system, and we characterize these multigraphs:
such a system exists if and only if $k_{2}(G-T)\leq2\left\vert T\right\vert $
holds for every set $T$ of vertices, where $k_{2}$ counts the components on
two vertices. In particular the bound holds for every graph with $3\leq
\Delta\leq4\delta(G)-4$. The paper is concluded with a conjecture on the tight
upper bound on $\chi_{\mathrm{rn}}^{\prime}(G).$

\end{abstract}

\textit{Keywords:} edge-coloring; rich edge; strong chromatic index.

\textit{AMS Subject Classification numbers:} 05C15

\section{Introduction}

All the multigraphs considered in this paper are finite and loopless, and
multiple edges are allowed. This is the setting of
\cite{PetrusevskiSkrekovski}, in which the invariant studied here was
introduced, and it is the setting in which we work throughout. A multigraph
with no multiple edges is a \emph{graph}, so all our results hold for graphs
in particular.

Let $G$ be a loopless multigraph and let $\varphi$ be a proper edge-coloring
of $G$. For a vertex $v$ we denote by $E(v)$ the set of all edges incident to
$v,$ we put $d(v)=\left\vert E(v)\right\vert $ and $\varphi(E(v))=\{\varphi
(f):f\in E(v)\}$. An edge $e$ of $G$ is \emph{rich} if all the edges adjacent
to $e$ receive mutually distinct colors. A proper edge-coloring of $G$ is a
\emph{rich-neighbor coloring} if every non-isolated edge of $G$ has at least
one rich edge adjacent to it. Here an isolated edge is an edge adjacent to no
other edge. The least number of colors in a rich-neighbor coloring of $G$ is
the \emph{rich-neighbor chromatic index} $\chi_{\mathrm{rn}}^{\prime}(G)$.
Notice that a component on one or two vertices imposes no restriction, since
in a component on two vertices every edge is rich in any proper edge-coloring.
In addition, we can color every component separately, so in what follows we
will mostly assume that our graphs are connected and of order at least three.

The invariant was introduced in \cite{PetrusevskiSkrekovski} as a relaxation
of the strong chromatic index $\chi_{s}^{\prime}(G)$. The latter is the least
number of colors in a proper edge-coloring in which \emph{every} edge is rich,
and it goes back to Fouquet and Jolivet \cite{FouquetJolivet}. A coloring in
which every edge is rich is in particular a rich-neighbor coloring, hence, as
observed in \cite{ChenNanZhou},
\[
\chi_{\mathrm{rn}}^{\prime}(G)\leq\chi_{s}^{\prime}(G)
\]
for every graph $G$. In \cite{PetrusevskiSkrekovski} it was shown that every
subcubic multigraph satisfies $\chi_{\mathrm{rn}}^{\prime}(G)\leq7$. It was
also conjectured there that $\chi_{\mathrm{rn}}^{\prime}(G)\leq5$ holds for
every connected subcubic multigraph distinct from $K_{4}$. The bound was
improved to $6$ by Chen, Nan and Zhou \cite{ChenNanZhou} for subcubic graphs
with $\mathrm{mad}(G)<36/13$ and for claw-free subcubic graphs. They in fact
obtain a coloring with the additional property that every vertex of degree two
is incident to a rich edge.

All the work quoted above concerns subcubic multigraphs, and for a larger
maximum degree the invariant is considered here for the first time. The only
bound known previously was the one inherited from the strong chromatic index,
namely $\chi_{\mathrm{rn}}^{\prime}(G)\leq\chi_{s}^{\prime}(G)\leq2\Delta
^{2}-2\Delta+1,$ which is obtained by coloring the edges greedily, since there
are at most $2\Delta^{2}-2\Delta$ edges at distance at most one from a given
edge. On the other hand, a rich edge $xy$ forces $d(x)+d(y)-1$ distinct
colors, so already a single rich edge in a $\Delta$-regular graph requires
$2\Delta-1$ colors. Thus $\chi_{\mathrm{rn}}^{\prime}$ was bounded below by a
linear and above by a quadratic function of $\Delta,$ and it was not known
which of the two is the truth. Notice that this is in sharp contrast with the
strong chromatic index, where the quadratic growth is genuine, since $\chi
_{s}^{\prime}(K_{n})=\binom{n}{2}$.

In this paper we settle that a linear upper bound does hold. Namely, we show
that every loopless multigraph $G$ with $\Delta=\Delta(G)\geq3$ satisfies
$\chi_{\mathrm{rn}}^{\prime}(G)\leq6\Delta-8,$ and that a coloring realizing
this bound can be found in polynomial time. Since a rich-neighbor coloring is
in particular a proper edge-coloring, we have $\chi_{\mathrm{rn}}^{\prime
}(G)\geq\chi^{\prime}(G)\geq\Delta,$ so this gives $\chi_{\mathrm{rn}}%
^{\prime}(G)=\Theta(\Delta)$ for every multigraph with $\Delta\geq2$. For
$\Delta$-regular graphs with $\Delta\geq3$ the lower bound is $2\Delta-1,$ and
we prove the better upper bound $5\Delta-8$. The multigraphs with $\Delta
\leq2$ are disposed of in Section 2, so that all our later statements can
assume $\Delta\geq3$.

The bound $5\Delta-8$ relies on a structure which we call a dominating
$P_{\geq3}$-system: a set of edges whose components are paths on at least
three vertices and which leaves an independent set of vertices uncovered.
Section 4 is devoted to these systems. We characterize the multigraphs which
have one, by a condition of Tutte type on the number of components on two
vertices, and we deduce that a graph has a dominating $P_{\geq3}$-system
whenever $\delta(G)\geq2$ and $\Delta\leq4\delta(G)-4$. The regular graphs are
the special case $\delta=\Delta$ of that condition. Not every multigraph has
such a system. For example, the spider obtained from $K_{1,3}$ by subdividing
every edge once has none.

\section{Preliminaries}

Let us fix the notation. For a multigraph $G$ we denote by $\Delta$ the
maximum degree $\Delta(G)$ and by $\delta(G)$ the minimum degree. For an edge
$e=uv$ we denote by $m(e)$ the number of edges of $G$ joining $u$ and $v,$ so
that $m(e)\geq1$ and
\begin{equation}
\left\vert E(u)\cup E(v)\right\vert =d(u)+d(v)-m(e). \label{For_union}%
\end{equation}

We will choose a set $R\subseteq E(G)$ of edges which we want to make rich. For
such $R$ we denote by $R(u)=E(u)\cap R$ the set of the edges of $R$ incident
to a vertex $u,$ and we put $\deg_{R}(u)=\left\vert R(u)\right\vert $.
Further, $V(R)$ is the set of all vertices incident to at least one edge of
$R$. Finally, $N_{R}(u)$ is the set of the vertices joined to $u$ by an edge
of $R$.

For an
integer $d\geq0,$ a graph $H$ is $d$\emph{-degenerate} if every subgraph of
$H$ has a vertex of degree at most $d$. Equivalently, the vertices of $H$ can
be ordered as $v_{1},\ldots,v_{n}$ so that every $v_{i}$ has at most $d$
neighbors among $v_{1},\ldots,v_{i-1}$. Such an order is obtained by
repeatedly removing a vertex of smallest degree and reversing the order of
removal, and it is called a \emph{degeneracy order} of $H$. Coloring the
vertices greedily along an order with this property uses at most $d+1$ colors,
and both the order and the coloring are found in linear time.

We start with a simple lower bound from \cite{ChenNanZhou}.

\begin{observation}
\label{Prop_lower}Let $G$ be a $\Delta$-regular graph with $\Delta\geq2$. Then
$\chi_{\mathrm{rn}}^{\prime}(G)\geq2\Delta-1$.
\end{observation}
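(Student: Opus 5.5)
The plan is to show that any rich-neighbor coloring of $G$ contains at least one rich edge, and then to count the colors that this single rich edge forces.

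First, I will confirm that a rich edge exists. Since $G$ is $\Delta$-regular with $\Delta\geq2$, no edge of $G$ is isolated. Take any edge $e=uv$: because $d(u)=\Delta\geq2$, there is another edge at $u$, so $e$ is adjacent to at least one edge. The definition of a rich-neighbor coloring $\varphi$ then gives an edge $f=xy$ adjacent to $e$ which is rich. (The statement is about graphs, so we may also use that there are no multiple edges.)

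Second, I will count colors around $f$. The edges adjacent to $f$ are those of $(E(x)\cup E(y))\setminus\{f\}$, and richness says they all receive distinct colors. Since $\varphi$ is proper, these colors also differ from $\varphi(f)$, because each such edge shares an endpoint with $f$. Hence all edges of $E(x)\cup E(y)$ receive pairwise distinct colors. By \eqref{For_union} with $m(f)=1$,
\[
\left\vert E(x)\cup E(y)\right\vert =d(x)+d(y)-1=2\Delta-1 .
\]
So $\varphi$ uses at least $2\Delta-1$ colors. Since $\varphi$ was an arbitrary rich-neighbor coloring, $\chi_{\mathrm{rn}}^{\prime}(G)\geq2\Delta-1$.

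There is no real obstacle. The only points needing care are that a rich edge is guaranteed to exist, which uses that edges are non-isolated when $\Delta\geq2$, and that the color of $f$ itself is distinct from its neighbors' colors by properness.
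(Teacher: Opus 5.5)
Your proof is correct and is essentially the paper's argument: since every edge is non-isolated, some edge $xy$ is rich, and the $2\Delta-1$ edges of $E(x)\cup E(y)$ in a simple $\Delta$-regular graph must then receive pairwise distinct colors. Your use of properness to separate $\varphi(xy)$ from the colors of the other edges is the same fact the paper uses when it says $\varphi(E(x))$ and $\varphi(E(y))$ meet only in $\varphi(xy)$.
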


\noindent Indeed, since $\Delta\geq2$ every edge of $G$ is adjacent to another
one, so a rich-neighbor coloring has at least one rich edge $xy.$ The sets
$\varphi(E(x))$ and $\varphi(E(y))$ have $\Delta$ elements each and meet only
in $\varphi(xy),$ which already forces $2\Delta-1$ colors. The assumption that
$G$ is a graph cannot be omitted here due to (\ref{For_union}). Neither can
the assumption of regularity be omitted, since every edge of the star
$K_{1,\Delta}$ is rich in any proper edge-coloring, which yields
$\chi_{\mathrm{rn}}^{\prime}(K_{1,\Delta})=\Delta$.

We next dispose of the multigraphs of maximum degree at most two, so that all
our later statements can be made under the assumption $\Delta\geq3$.

\begin{proposition}
\label{Prop_small}Let $G$ be a loopless multigraph with $\Delta(G)\leq2$. Then
$\chi_{\mathrm{rn}}^{\prime}(G)\leq4,$ and in fact $\chi_{\mathrm{rn}}%
^{\prime}(G)\leq3$ unless some component of $G$ is $C_{4}$ or $C_{7}$.
\end{proposition}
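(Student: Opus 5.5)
The plan is to color each component of $G$ separately. A loopless multigraph with $\Delta(G)\leq2$ has only the following components: isolated vertices, single edges, digons (two vertices joined by two edges), paths $P_{n}$, and cycles $C_{n}$ with $n\geq3$. Components on at most two vertices impose no restriction, as noted in the introduction, so only paths and cycles remain. For a component with edges $e_{1},e_{2},\ldots$ in cyclic or linear order, an edge $e_{i}$ is rich exactly when its (at most two) neighbors $e_{i-1},e_{i+1}$ receive distinct colors. The whole problem is therefore a question about words over the alphabet of colors.

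\textbf{Paths and cycles of length divisible by three.} For a path I would color $e_{1},e_{2},\ldots$ periodically as $1,2,3,1,2,3,\ldots$. This coloring is proper, and any two edges at distance two receive distinct colors, so every edge is rich. The same periodic coloring works for $C_{n}$ with $3\mid n$.

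\textbf{Cycles with $n\equiv2\pmod 3$.} For these I would use the cyclic word $12\,(123)^{k}$ with $n=3k+2$ and $k\geq1$. The rich edges are those whose two neighbors differ.
\begin{itemize}
\item In $C_{5}$ with word $12123$, the rich edges are $e_{1},e_{4},e_{5}$. Every edge is adjacent to one of them.
\item For $k\geq2$, every edge inside the $123$ blocks is rich, except possibly near the inserted $12$ block. A check of the window $\ldots 3\,1\,2\,1\,2\,3\,1\ldots$ shows that only the two edges colored $2,1$ in the middle fail to be rich, and each of them has a rich neighbor.
\item The coloring is proper at the wrap-around, since the word ends in $3$ and begins with $1$.
\end{itemize}

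\textbf{Cycles with $n\equiv1\pmod 3$, $n\geq10$.} Here I would insert two $12$ blocks, using the word $(12)(123)^{a}(12)(123)^{b}$ with $a,b\geq1$ and $a+b=(n-4)/3$. Each $12$ block is then flanked by $123$ blocks on both sides. Properness and the rich-neighbor property depend only on windows of length five. Every such window already occurs in the verified pattern for $n\equiv2\pmod 3$, so the property follows without new checking.

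\textbf{The exceptions $C_{4}$ and $C_{7}$.} With four colors, the word $1234$ is a strong coloring of $C_{4}$, and the word $1234123$ is a strong coloring of $C_{7}$ (the wrap-around pairs $3,1$ are fine). This gives $\chi_{\mathrm{rn}}^{\prime}\leq4$ in all cases.

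To justify the word ``unless'', I would also show that $C_{4}$ and $C_{7}$ need four colors.
\begin{itemize}
\item For $C_{4}$, a proper $3$-coloring is, up to symmetry, either $1212$, which has no rich edge, or $1213$. In $1213$ the rich edges are $e_{1},e_{3}$, and they are not adjacent to each other, so $e_{1}$ has no rich neighbor.
\item For $C_{7}$ I would count. With three colors, the non-rich edges correspond to positions $i$ with $c_{i-1}=c_{i+1}$. The condition says no two consecutive edges are both non-rich \emph{and} each rich edge has a rich neighbor; equivalently, the rich edges form runs of length at least $2$ and the non-rich edges form runs of length $1$. One then shows that the sequence $c_{1},c_{3},c_{5},c_{7},c_{2},\ldots$ taken along odd steps, which is a single $7$-cycle since $7$ is odd, has constraints incompatible with $7\not\equiv0\pmod3$ in the allowed patterns. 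Failing a clean argument, I would finish by a short exhaustive case check.
\end{itemize}

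The main obstacle is this $C_{7}$ impossibility, which is where I expect a case analysis to be necessary. The upper bounds themselves are routine.
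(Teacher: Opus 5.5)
Your upper-bound argument is correct and is essentially the paper's proof. Your cyclic word $12(123)^k$ is a rotation of the paper's $(123)^k12$, and your word $(12)(123)^a(12)(123)^b$ with $a=1$ is exactly the paper's coloring for $n\equiv1\pmod 3$, $n\geq10$. Your strong $4$-colorings of $C_4$ and $C_7$ also work, and the reduction to windows of length five is a legitimate way to organize the check: every window of the new word does occur in $12(123)^k$.

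The part showing that $C_4$ and $C_7$ need four colors is not needed. The statement only asserts $\chi_{\mathrm{rn}}^{\prime}(G)\leq3$ when no component is $C_4$ or $C_7$, and the paper does not prove the converse here.

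Be aware, though, that your $C_7$ sketch starts from a false reformulation. The rich-neighbor condition on a cycle says that no edge has two non-rich neighbors. It does allow two consecutive non-rich edges: your own coloring $12123$ of $C_5$ has $e_2,e_3$ both non-rich. The correct run description is that rich runs have length at least $2$ and non-rich runs have length at most $2$. Ruling out your stronger condition would prove nothing about $C_7$.

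If you want the lower bound, here is a clean argument. Take colors in $\mathbb{Z}_3$ and put $s_i=c_{i+1}-c_i\in\{\pm1\}$. Then $e_i$ is rich if and only if $s_{i-1}=s_i$. So the condition says that no maximal run of equal $s_i$ has length $2$, and no two consecutive runs have length $1$. On the other hand, $\sum_i s_i\equiv0\pmod 3$ forces the two signs to occur $2$ and $2$ times for $n=4$, and $5$ and $2$ times for $n=7$. In both cases the run conditions fail at once. For $n=7$, the minority sign must form two runs of length $1$, so both majority runs need length at least $3$, which exceeds the $5$ available.
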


\begin{proof}
Every component of $G$ is an isolated vertex, a path, or a cycle $C_{n}$ with
$n\geq2$. We color the edges of every path and of every cycle $C_{n}$ with
$n\not \equiv 1$ $(\mathrm{mod}$ $3)$ by repeating the sequence $1,2,3$. If
$n\equiv1$ $(\mathrm{mod}$ $3)$ and $n\geq10$, we start with the sequence
$1,2,1,2,3,1,2$ and then repeat the sequence $1,2,3$ until all edges are
colored. For $C_{4}$ and $C_{7}$ we color the edges by $1,2,3,4$ and
$1,2,3,1,2,3,4$, respectively. It is easily verified that all these are
rich-neighbor colorings.
\end{proof}

Our approach is to prescribe in advance a set $R$ of edges which we force to
be rich, and then to color $G$ greedily so that this is achieved. To this end,
given a loopless multigraph $G$ and a set $R\subseteq E(G),$ we denote by
$\mathcal{G}$ the graph on $E(G)$ in which two distinct edges are adjacent if
they are adjacent in $G$ or they are both adjacent to an edge of $R$.

\begin{lemma}
\label{Lemma_conflict}Let $G$ be a loopless multigraph and $R\subseteq E(G)$.
Then every proper vertex-coloring of $\mathcal{G}$ is a proper edge-coloring
of $G$ in which every edge of $R$ is rich.
\end{lemma}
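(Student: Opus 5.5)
The plan is to unwind the definition of $\mathcal{G}$ directly, in two parts. Let $c$ be a proper vertex-coloring of $\mathcal{G}$, and regard it as an edge-coloring $\varphi$ of $G$ by putting $\varphi(e)=c(e)$ for every $e\in E(G)=V(\mathcal{G})$.

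First we check that $\varphi$ is a proper edge-coloring of $G$. Take two distinct edges $e,f$ of $G$ that share an end vertex. This includes the case where $e$ and $f$ are parallel edges joining the same pair of vertices. By definition, $e$ and $f$ are adjacent in $\mathcal{G}$. Since $c$ is proper, $\varphi(e)\neq\varphi(f)$.

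Second, fix $r=uv\in R$ and show that $r$ is rich. Let $e\neq f$ be two edges adjacent to $r$, that is, two distinct edges of $E(u)\cup E(v)$ other than $r$. Both are adjacent to the edge $r$ of $R$, so they are adjacent in $\mathcal{G}$ by the second clause of the definition. Hence $\varphi(e)\neq\varphi(f)$. So all edges adjacent to $r$ receive mutually distinct colors, which is exactly richness. Note that parallel copies of $r$ are also adjacent to $r$ and are covered by the same argument; they additionally differ from $\varphi(r)$ by properness, although richness does not require this.

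There is no real obstacle. The only point needing care is the multigraph setting: "adjacent" must be read as sharing at least one end vertex, so that parallel edges count as adjacent both in $G$ and, through $r$, in $\mathcal{G}$. With that reading both parts follow immediately from the definitions.
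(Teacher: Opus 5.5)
Your proof is correct and is essentially the paper's argument: properness comes from the first clause in the definition of $\mathcal{G}$, and richness of each $r\in R$ comes from the second clause applied to any two edges adjacent to $r$. The only cosmetic difference is that the paper takes arbitrary distinct $e,f\in E(x)\cup E(y)$, so that pairs involving $xy$ itself fall under the first clause, whereas you restrict to edges other than $r$, which is all that richness requires.
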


\begin{proof}
Adjacent edges of $G$ are adjacent in $\mathcal{G},$ so a proper
vertex-coloring $\varphi$ of $\mathcal{G}$ is a proper edge-coloring of $G$.
It remains to show that every edge $xy\in R$ is rich. Two edges $e,f\in
E(x)\cup E(y)$ are either adjacent, or they are both adjacent to $xy\in R.$
This implies they are adjacent in $\mathcal{G}$, so $\varphi(e)\not =%
\varphi(f).$
\end{proof}

By Lemma \ref{Lemma_conflict} every edge of $R$ is rich. The next lemma says
when this gives a rich-neighbor coloring of $G$. We use it in Sections 3 and 4.

\begin{lemma}
\label{Lemma_service}Let $G$ be a loopless multigraph and let $R\subseteq
E(G)$ be such that no connected component of $R$ consists of a single edge and
the set $V(G)\setminus V(R)$ is independent in $G$. If $\varphi$ is a proper
edge-coloring of $G$ in which every edge of $R$ is rich, then $\varphi$ is a
rich-neighbor coloring of $G$.
\end{lemma}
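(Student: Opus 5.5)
Take an arbitrary non-isolated edge $e=uv$ of $G$; we must exhibit a rich edge adjacent to it. Since $\varphi$ makes every edge of $R$ rich, it suffices to find an edge of $R$, different from $e$, that shares an endpoint with $e$. (Parallel edges count as adjacent, since they share both endpoints.) Split into two cases according to whether $e$ itself lies in $R$.

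\emph{Case $e\in R$.} The component of $R$ containing $e$ is not a single edge, by hypothesis. Here ``component of $R$'' means a component of the subgraph spanned by the edges of $R$. So this component contains another edge $f\in R$, $f\neq e$. The component is connected, so I can take $f$ sharing an endpoint with $e$: walk within the component from $e$ to any other edge, and the first edge after $e$ on that walk is incident to $u$ or $v$. Then $f$ is rich and adjacent to $e$. This also covers the case where $f$ is parallel to $e$.

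\emph{Case $e\notin R$.} Since $V(G)\setminus V(R)$ is independent, $u$ and $v$ are not both outside $V(R)$. Say $u\in V(R)$. Then some edge $f\in R$ is incident to $u$, and $f\neq e$ because $e\notin R$. Hence $f$ is a rich edge adjacent to $e$.

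In both cases $e$ has a rich adjacent edge. Since $\varphi$ is proper, it is therefore a rich-neighbor coloring. There is no real obstacle. The only care needed is in the first case, where the hypothesis on components of $R$ is used to guarantee a second edge $f$ of $R$ at $u$ or $v$. Note that the argument never needs non-isolatedness of $e$, except implicitly: isolated edges are simply exempt from the definition.
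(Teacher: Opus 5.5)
Your proof is correct and follows the paper's argument: the same split into the case $e\in R$, where the component of $R$ containing $e$ supplies a second $R$-edge adjacent to $e$, and the case $e\notin R$, where independence of $V(G)\setminus V(R)$ puts an endpoint of $e$ in $V(R)$. Your walk-in-the-component justification only makes explicit a step the paper states in one line.
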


\begin{proof}
Let $e=uv$ be an edge of $G$. If $e\in R,$ then the component of $R$
containing $e$ has at least two edges, so $e$ is adjacent to another edge of
$R,$ which is rich. If $e\not \in R,$ then $u$ and $v$ are adjacent, so they
are not both in the independent set $V(G)\setminus V(R)$. Hence one of them,
say $u,$ is incident to an edge of $R,$ which is distinct from $e$ and thus a
rich edge adjacent to $e$.
\end{proof}

\section{The general bound $6\Delta-8$}

In this section we prove the general upper bound. For that purpose we take as
$R$ the edge set of a spanning tree of $G,$ and we bound the average degree of
every subgraph of $\mathcal{G}$.

\begin{lemma}
\label{Lemma_tree}Let $G$ be a connected loopless multigraph with
$\Delta=\Delta(G)\geq3$ and let $R$ be the edge set of a spanning tree of $G$.
Then $\mathcal{G}$ is $\left(  6\Delta-9\right)  $-degenerate.
\end{lemma}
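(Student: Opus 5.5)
The plan is to prove degeneracy by an averaging argument. We show that every nonempty set $S\subseteq E(G)$ induces a subgraph $\mathcal{G}[S]$ with fewer than $(3\Delta-4)|S|+\frac12|S|$ edges, or more precisely with average degree strictly below $6\Delta-8$. Then some vertex of $\mathcal{G}[S]$ has degree at most $6\Delta-9$, which is exactly $(6\Delta-9)$-degeneracy. For $v\in V(G)$ write $d_S(v)=|E(v)\cap S|$, so that $\sum_v d_S(v)=2|S|$.

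We split the edges of $\mathcal{G}[S]$ into two types.
\begin{itemize}
\item \emph{Type one} consists of pairs of edges of $S$ sharing an endpoint. These number at most $\sum_v\binom{d_S(v)}{2}\le\frac{\Delta-1}{2}\sum_v d_S(v)=(\Delta-1)|S|$.
\item \emph{Type two} consists of pairs $e,f$ that are not adjacent in $G$ but are both adjacent to some tree edge $xy\in R$. Such a pair then has one edge at $x$ avoiding $y$ and the other at $y$ avoiding $x$.
\end{itemize}
To control the type-two pairs, root the spanning tree at some vertex $r$ and orient each tree edge $xy$ so that $y$ is the child. The pairs charged to $xy$ number at most $(d_S(x)-m')(d_S(y)-m')$, where $m'$ counts the $x$--$y$ edges lying in $S$; in any case this is at most $(\Delta-1)\cdot d'_S(y)$, where $d'_S(y)$ counts the edges of $S$ at $y$ not going to its parent. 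Every non-root vertex is the child of exactly one tree edge. Hence the type-two pairs number at most $(\Delta-1)\sum_{y\ne r}d'_S(y)\le(\Delta-1)\cdot 2|S|$. This already gives the weak bound of $3(\Delta-1)|S|$ pairs, i.e.\ average degree at most $6\Delta-6$. Note that a pair may be counted via several tree edges or via both types, so we only ever overcount, which is harmless.

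The main work, and the step I expect to be the obstacle, is gaining the remaining additive constant, passing from average degree $6\Delta-6$ to something below $6\Delta-8$. I would try the following refinements in order.
\begin{enumerate}
\item Exploit that in the type-two count the factor for the parent side is really $d_S(x)$ minus the edges from $x$ to $y$. In particular it is at most $\Delta-1$, and at most $\Delta-2$ when $x$ has another $S$-edge toward $y$'s side or when the tree edge itself lies in $S$.
\item Exploit that an edge $e=uv\in S$ is counted from both of its endpoints in $\sum_y d'_S(y)$. So we can instead charge each edge $e$ only to the endpoint that is deeper in the tree, and treat the other endpoint's contribution through the type-one count.
\item Choose $S$-minimal configurations: take an edge $e\in S$ whose deeper endpoint $y$ is a deepest vertex of $V(S)$ in the rooted tree, and bound its degree in $\mathcal{G}[S]$ directly.
\end{enumerate}
For the third option, all edges of $S$ at $y$ go to the parent $x$ of $y$ or to non-descendants. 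Its conflicts are at most $2\Delta-2$ adjacent edges plus those via tree edges at $y$ and at the other endpoint. Because $y$ is deepest, the tree edges below $y$ contribute nothing from $S$, which kills most of the quadratic term. If I combine the deepest-vertex choice with the counting above, the direct degree count should close at $6\Delta-9$: roughly $2\Delta-2$ from adjacency, at most $2(\Delta-1)$ through the parent edges of the two endpoints minus overlaps, and at most $2\Delta-5$ from the remaining tree edges at the other endpoint, where $\Delta\ge3$ is used to absorb the small cases. This last bookkeeping is where I expect the real difficulty to lie.

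Finally, the order required by the degeneracy definition is produced by repeatedly deleting such a low-degree edge. This also yields the polynomial-time claim via the greedy coloring recalled in Section 2.
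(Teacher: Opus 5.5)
\textbf{There is a genuine gap.} What you actually prove is $|E(\mathcal{G}[S])|\le 3(\Delta-1)|S|$, i.e.\ average degree at most $6\Delta-6$. That only gives $(6\Delta-6)$-degeneracy. The passage to $6\Delta-9$ is left as a list of refinements, and none of them is carried out.

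Refinement 3 does not work as stated. Choosing $y$ deepest controls only the tree edges below $y$. It says nothing about the other endpoint $v$ of $e=yv$. Through a non-tree edge, $v$ may be a shallow vertex, even the root. Such a $v$ can have up to $\Delta-1$ tree neighbours, each carrying up to $\Delta-1$ further edges of $S$ (these may go to vertices of depth at most that of $y$). So the contribution through tree edges at $v$ can be of order $\Delta^2$, not $2\Delta-5$.

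Refinement 1 also contains a false claim: that the parent-side factor is at most $\Delta-2$ when the tree edge lies in $S$. If $xy\in S$ is a simple edge and $d_S(x)=\Delta$, the factor is $\Delta-1$.

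\textbf{The missing idea} is to couple your two counts vertex by vertex, using the parent tree edge $e_u$ of each non-root vertex $u$ as a switch, and to root the tree inside $V(S)$. Write $s_u=d_S(u)$, and let $\varepsilon_u=1$ if $e_u\in S$ and $\varepsilon_u=0$ otherwise. Charging type-two pairs to the child endpoint as you do, vertex $u$ contributes at most $s_u(s_u-1)+2(\Delta-1)(s_u-\varepsilon_u)$ to $2|E(\mathcal{G}[S])|$.
\begin{itemize}
\item If $\varepsilon_u=0$, then $e_u\in E(u)\setminus S$ forces $s_u\le\Delta-1$, so the type-one part is at most $(\Delta-2)s_u$.
\item If $\varepsilon_u=1$, the type-two part drops by $2(\Delta-1)\ge s_u$.
\end{itemize}
In both cases the summand is at most $(3\Delta-4)s_u$.

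Now choose the root $r$ with $s_r>0$; the root may depend on $S$, since only the tree is fixed. Its contribution is $s_r(s_r-1)\le(\Delta-1)s_r$, which is strictly less than $(3\Delta-4)s_r$. Hence $2|E(\mathcal{G}[S])|<(6\Delta-8)|S|$, so $\mathcal{G}[S]$ has a vertex of degree at most $6\Delta-9$. Without this strictness at the root you would only reach $(6\Delta-8)$-degeneracy, one short of the claim.
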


\begin{proof}
Let $S\subseteq E(G)$ be non-empty, and for a vertex $u$ put $S(u)=E(u)\cap S$
and $s_{u}=\left\vert S(u)\right\vert \leq\Delta$, so $\sum_{u}s_{u}%
=2\left\vert S\right\vert $. Choose a vertex $r$ with $s_{r}>0$ and root the
spanning tree at $r$. For every vertex $u\not =r$ let $e_{u}$ be the edge of
$R$ joining $u$ to its parent in the spanning tree, and put $\varepsilon
_{u}=1$ if $e_{u}\in S$ and $\varepsilon_{u}=0$ otherwise. Notice that every
edge of $R$ is $e_{u}$ for exactly one vertex $u\not =r$.

We wish to bound the number of edges of $\mathcal{G}[S]$. Pairs $\{f,g\}$
adjacent in $G$ share an endvertex $u,$ so there are at most $\sum_{u}%
\binom{s_{u}}{2}$ of them. In the remaining edges $\{f,g\}$ of $\mathcal{G}%
[S],$ the edges $f$ and $g$ are non-adjacent and they are joined by some
$e_{u}=xu$ from $R,$ where $x$ is the parent of $u$. Then one of them lies in
$S(u)\setminus\{e_{u}\}$ and the other in $S(x)\setminus\{e_{u}\},$ so $e_{u}$
gives at most $\left(  s_{x}-\varepsilon_{u}\right)  \left(  s_{u}%
-\varepsilon_{u}\right)  \leq\left(  \Delta-1\right)  \left(  s_{u}%
-\varepsilon_{u}\right)  $ such pairs. Hence
\[
2\left\vert E(\mathcal{G}[S])\right\vert \leq s_{r}(s_{r}-1)+\sum_{u\not =%
r}\left(  s_{u}(s_{u}-1)+2\left(  \Delta-1\right)  \left(  s_{u}%
-\varepsilon_{u}\right)  \right)  .
\]
Let us show that every summand for $u\not =r$ is at most $\left(
3\Delta-4\right)  s_{u}$. Namely, if $\varepsilon_{u}=0,$ then $s_{u}%
\leq\Delta-1,$ so the summand is at most $\left(  \Delta-2\right)
s_{u}+2\left(  \Delta-1\right)  s_{u}=\left(  3\Delta-4\right)  s_{u}$. If
$\varepsilon_{u}=1,$ then from $s_{u}\leq\Delta$ we have that the summand is
at most $s_{u}\left(  \Delta-1\right)  +2\left(  \Delta-1\right)  \left(
s_{u}-1\right)  \leq\left(  3\Delta-4\right)  s_{u}$. For the root $r$ we have
$s_{r}(s_{r}-1)\leq s_{r}\left(  \Delta-1\right)  <\left(  3\Delta-4\right)
s_{r},$ since $s_{r}>0$. Thus we obtain
\[
2\left\vert E(\mathcal{G}[S])\right\vert <\left(  3\Delta-4\right)  \sum
_{u}s_{u}=\left(  6\Delta-8\right)  \left\vert S\right\vert ,
\]
so $\mathcal{G}[S]$ has a vertex of degree at most $6\Delta-9$. Thus
$\mathcal{G}$ is $\left(  6\Delta-9\right)  $-degenerate.
\end{proof}

Using Lemma \ref{Lemma_tree} we now prove the general upper bound.

\begin{theorem}
\label{Tm_main}Every loopless multigraph $G$ with $\Delta=\Delta(G)\geq3$
satisfies
\[
\Delta\leq\chi_{\mathrm{rn}}^{\prime}(G)\leq6\Delta-8.
\]
Moreover, a rich-neighbor $\left(  6\Delta-8\right)  $-coloring can be found
in polynomial time.
\end{theorem}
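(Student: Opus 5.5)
The lower bound is immediate: a rich-neighbor coloring is in particular a proper edge-coloring, and a vertex of degree $\Delta$ has $\Delta$ incident edges, which must receive pairwise distinct colors. For the upper bound we color each component separately. This is harmless, because the property of being rich and the rich-neighbor condition are both local to a component, and we may reuse the same palette of $6\Delta-8$ colors in every component. Components on one or two vertices impose no restriction, and any proper coloring of them uses at most $\Delta$ colors. Components $H$ with $\Delta(H)\leq2$ are handled by Proposition \ref{Prop_small}, which needs at most $4\leq6\Delta-8$ colors since $\Delta\geq3$.

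So let $H$ be a connected component with $\Delta(H)\geq3$ and at least three vertices. We take $R$ to be the edge set of a spanning tree of $H$, and we check the hypotheses of Lemma \ref{Lemma_service}. First, $V(R)=V(H)$, so $V(H)\setminus V(R)$ is empty and trivially independent. Second, $R$ is a single tree on at least three vertices, so its only component has at least two edges and no component consists of a single edge. By Lemma \ref{Lemma_tree} applied to $H$ with $\Delta(H)\geq3$, the graph $\mathcal{G}$ is $(6\Delta(H)-9)$-degenerate. Coloring it greedily along a degeneracy order therefore uses at most $6\Delta(H)-8\leq6\Delta-8$ colors. By Lemma \ref{Lemma_conflict} this coloring is a proper edge-coloring of $H$ in which every edge of $R$ is rich, and by Lemma \ref{Lemma_service} it is a rich-neighbor coloring of $H$.

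For the algorithmic part, each step is polynomial:
\begin{itemize}
\item the components and a spanning tree of each are found by BFS;
\item $\mathcal{G}$ has $|E(G)|$ vertices, and its adjacency is decided by checking common endpoints and common adjacent tree edges, which takes polynomial time;
\item a degeneracy order and the greedy coloring are linear in the size of $\mathcal{G}$;
\item the colorings of Proposition \ref{Prop_small} are explicit.
\end{itemize}

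The only delicate point is the small-component bookkeeping. Lemma \ref{Lemma_tree} requires the component's own maximum degree to be at least $3$, which is why components with maximum degree at most $2$ are routed to Proposition \ref{Prop_small} instead. We also need that the resulting bounds never exceed $6\Delta-8$. The main work, the degeneracy bound, is already done in Lemma \ref{Lemma_tree}.
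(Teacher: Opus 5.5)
Your proposal is correct and follows essentially the same route as the paper's proof. The lower bound comes from properness, and the components are colored separately, with those on at most two vertices or with maximum degree at most two handled directly or by Proposition~\ref{Prop_small}. Every other component is colored by taking a spanning tree as $R$ and applying Lemmas~\ref{Lemma_tree}, \ref{Lemma_conflict} and \ref{Lemma_service}, together with the same polynomial-time remarks.
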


\begin{proof}
The lower bound holds since a rich-neighbor coloring is a proper
edge-coloring, so $\chi_{\mathrm{rn}}^{\prime}(G)\geq\chi^{\prime}%
(G)\geq\Delta$. For the upper bound, notice that the components of $G$ can be
colored independently, so it suffices to color every component $H$ of $G$ with
at most $6\Delta-8$ colors. If $H$ has at most two vertices, then all its
edges are rich in any proper edge-coloring, so $\Delta(H)\leq\Delta$ colors
suffice. If $\Delta(H)\leq2,$ then four colors suffice by Proposition
\ref{Prop_small}. Hence, assume that $H$ has at least three vertices and
$\Delta(H)\geq3,$ and take for $R$ the edge set of a spanning tree of $H$. By
Lemma \ref{Lemma_tree}, $\mathcal{G}$ is $\left(  6\Delta(H)-9\right)
$-degenerate, so the greedy coloring of $\mathcal{G}$ along a degeneracy
order uses at most $6\Delta(H)-8\leq6\Delta-8$ colors. By Lemma
\ref{Lemma_conflict}, it is a proper edge-coloring of $H$ in which every edge
of $R$ is rich. The set $R$ satisfies the assumptions of Lemma
\ref{Lemma_service}, since it is connected with at least two edges and
$V(R)=V(H),$ so this is a rich-neighbor coloring of $H$.

A spanning tree, the graph $\mathcal{G}$ and a degeneracy order of
$\mathcal{G}$ are found in time polynomial in $\left\vert V(G)\right\vert
+\left\vert E(G)\right\vert ,$ so the coloring can be found in polynomial time.
\end{proof}

\noindent Thus $\chi_{\mathrm{rn}}^{\prime}(G)$ is bounded from both sides by
linear functions of $\Delta,$ implying $\chi_{\mathrm{rn}}^{\prime}%
(G)=\Theta(\Delta)$. For $\Delta$-regular graphs the lower bound can be
improved to $2\Delta-1$ by Observation \ref{Prop_lower}.

\section{Dominating $P_{\geq3}$-systems and the bound $5\Delta-8$}

In this section we show that the bound of Theorem \ref{Tm_main} can be
improved to $5\Delta-8$ whenever the multigraph has a dominating $P_{\geq3}%
$-system (defined below), and we characterize the multigraphs which have one.

We first consider sets $R$ whose components are paths. For such $R$ we obtain
a better bound than in Lemma \ref{Lemma_tree}, using a suitable order of
$E(G)$. A set $R\subseteq E(G)$ is a \emph{$P_{\geq3}$-system} if every
component of $\left(  V(R),R\right)  $ is a path on at least three vertices.
If moreover $V(R)=V(G),$ then $R$ is a \emph{$P_{\geq3}$-factor}. If
$V(G)\setminus V(R)$ is independent in $G,$ then $R$ is a \emph{dominating
$P_{\geq3}$-system}. Notice that a $P_{\geq3}$-system is a forest with no
multiple edges, and that $\deg_{R}(v)\leq2$ for every vertex $v$.

\begin{lemma}
\label{Lemma_linear}Let $G$ be a loopless multigraph with $\Delta
=\Delta(G)\geq4$ and let $R\subseteq E(G)$ be a $P_{\geq3}$-system. Then
$\mathcal{G}$ is $\left(  5\Delta-9\right)  $-degenerate.
\end{lemma}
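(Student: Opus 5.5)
The proof is built around a suitable order of $E(G)$: every edge of $\mathcal{G}$ should have at most $5\Delta-9$ neighbors in $\mathcal{G}$ that precede it. We put all edges of $E(G)\setminus R$ first, in a degeneracy order of $\mathcal{G}[E(G)\setminus R]$. After them come the edges of $R$, path by path, each path $v_0v_1\cdots v_k$ listed in its natural order $e_1=v_0v_1,\ e_2=v_1v_2,\ \ldots,\ e_k=v_{k-1}v_k$. The two parts are then handled separately.

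\emph{The edges of $R$.} Let $f=e_i=uv$ with $u=v_{i-1}$ and $v=v_i$. Its neighbors in $\mathcal{G}$ are of two kinds.
\begin{itemize}
\item The edges adjacent to $f$ in $G$: at most $2\Delta-2$ of them.
\item The edges $g$ not adjacent to $f$ but sharing an endvertex with some $e\in R$ adjacent to $f$. Such an $e$ is $e_{i-1}$ or $e_{i+1}$ (the case $e=f$ gives nothing new, since then $g$ would be adjacent to $f$). So $g\in E(v_{i-2})\setminus\{e_{i-1}\}$ or $g\in E(v_{i+1})\setminus\{e_{i+1}\}$, at most $2(\Delta-1)$ edges.
\end{itemize}
This gives at most $4\Delta-4$ neighbors in total. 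Two of them, $e_{i+1}$ and $e_{i+2}$ (when they exist), come after $f$, so $f$ has at most $4\Delta-6$ earlier neighbors. Since $\Delta\geq3$, we have $4\Delta-6\leq5\Delta-9$. When $e_{i+1}$ or $e_{i+2}$ does not exist, the corresponding part of the count vanishes (for instance, $v$ is an end of the path and the second bullet contributes only from $v_{i-2}$), so the bound still holds.

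\emph{The edges outside $R$.} Here we repeat the counting of Lemma \ref{Lemma_tree}. Let $\emptyset\neq S\subseteq E(G)\setminus R$ and $s_u=|S(u)|$. Since the edges of $R$ at $u$ are not in $S$, we have $s_u\leq\Delta-\deg_R(u)$. Each edge $xy\in R$ contributes at most $s_xs_y\leq\frac12(s_x^2+s_y^2)$ pairs of non-adjacent edges joined through it. Summing, and charging $\tfrac12 s_u^2$ to $u$ for each of the $\deg_R(u)\le 2$ edges of $R$ at $u$, we get
\[
2|E(\mathcal{G}[S])|\leq\sum_u\bigl(s_u(s_u-1)+\deg_R(u)\,s_u^2\bigr).
\]
We bound each summand by cases on $\deg_R(u)$.
\begin{itemize}
\item If $\deg_R(u)=2$, then $s_u\leq\Delta-2$ and the summand is $s_u(3s_u-1)\leq(3\Delta-7)s_u$.
\item If $\deg_R(u)=1$, then $s_u\leq\Delta-1$ and the summand is at most $(3\Delta-4)s_u$.
\item If $\deg_R(u)=0$, the summand is at most $(\Delta-1)s_u$.
\end{itemize}
In every case the summand is at most $(3\Delta-4)s_u$, so $2|E(\mathcal{G}[S])|\leq(6\Delta-8)|S|$ and the average degree of $\mathcal{G}[S]$ is at most $3\Delta-4<5\Delta-8$. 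Hence $\mathcal{G}[S]$ has a vertex of degree at most $3\Delta-4\leq 5\Delta-9$ (using $\Delta\ge3$). It follows that $\mathcal{G}[E(G)\setminus R]$ is $(5\Delta-9)$-degenerate, and in its degeneracy order each such edge has at most $5\Delta-9$ earlier neighbors, all of which lie outside $R$.

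\emph{Conclusion.} Every edge has at most $5\Delta-9$ earlier neighbors in the whole order. This gives the degeneracy of $\mathcal{G}$ in the sense needed for greedy coloring, and for every subgraph the last vertex in the order witnesses the required low degree. The step I expect to need the most care is the first part, i.e.\ the bound for $R$-edges. One must check that reaching $g$ through $e_{i\pm1}$ really covers all second-type neighbors. One must also check that the path order removes exactly $e_{i+1},e_{i+2}$ from the back-neighborhood, without double counting when multiple edges or short paths ($k=2$) occur. The bound $4\Delta-6$ is far from tight for $\Delta\ge4$, which suggests the lemma's hypothesis $\Delta\geq4$ exists to make a different, vertex-based counting work. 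The ordering argument above seems to need only $\Delta\geq3$, so I would double-check it against small cases.
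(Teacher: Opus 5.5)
There is a genuine gap in your treatment of the edges outside $R$. From $2|E(\mathcal{G}[S])|\le(6\Delta-8)|S|$, the average degree $2|E(\mathcal{G}[S])|/|S|$ of $\mathcal{G}[S]$ is at most $6\Delta-8$, not $3\Delta-4$. The number $3\Delta-4$ only bounds $|E(\mathcal{G}[S])|/|S|$, which is half the average degree. So your computation, which is the one of Lemma~\ref{Lemma_tree}, yields only a vertex of degree at most $6\Delta-9$. Sharpening the cases does not help for large $\Delta$. For $\Delta\ge4$ the worst summand is $(3\Delta-7)s_u$, at vertices with $\deg_R(u)=2$, so averaging gives at most $6\Delta-14$, which exceeds $5\Delta-9$ once $\Delta\ge6$. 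No averaging argument can succeed, because this value is essentially attained. If $G$ is $\Delta$-regular of girth at least $5$ and the paths of $R$ are long, then every edge $ab\notin R$ with $a,b$ interior vertices of paths has at least $6(\Delta-2)-2=6\Delta-14$ neighbors in $\mathcal{G}[E(G)\setminus R]$, and such edges are almost all edges outside $R$. The low-degree vertices of $\mathcal{G}[S]$ lie near the ends of the paths. You therefore need an elimination order that peels the paths from their ends.

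The paper's ordering does exactly this. Order $V(G)$ with the paths of $R$ first, each traversed from one end to the other, followed by $V(G)\setminus V(R)$. Call the earlier endvertex of an edge its tail. In your convention, list the edges outside $R$ by non-increasing tail, all of them still before the edges of $R$. Take $e=uv\notin R$ with $u\prec v$, and put $X(e)=\{u,v\}\cup N_R(u)\cup N_R(v)$ and $U=\{x\in X(e):u\preceq x\}$. Every neighbor preceding $e$ is then an edge outside $R$ incident to a vertex of $U$. The predecessor of $u$ on its path is not in $U$, so $|U|\le5$, and the number of such neighbors is at most $\sum_{x\in U}(\Delta-\deg_R x)-2$. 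Since $\sum_{x\in U}\deg_R x\ge2(|U|-2)$, and $\sum_{x\in U}\deg_R x\ge7$ when $|U|=5$, this is at most $5\Delta-9$.

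Your part on the edges of $R$ mirrors the paper's Case 1 and then goes through, with one correction. For $f=e_{k-1}$ with $k\ge3$, only $e_k$ follows $f$, while $v_k$ may still carry $\Delta-1$ edges outside $R$, all preceding $f$. So the bound is $4\Delta-5$, not $4\Delta-6$. This is at most $5\Delta-9$ exactly when $\Delta\ge4$, which is where the hypothesis enters. Your remark that $\Delta\ge3$ suffices is therefore false. Indeed, the paper notes that the lemma fails for a suitable $P_{\geq3}$-factor of the Petersen graph.
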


\begin{proof}
In order to prove the claim, it is sufficient to introduce an order of
vertices of $\mathcal{G}$, i.e. edges of $E(G),$ so that each vertex of
$\mathcal{G}$ has at most $5\Delta-9$ neighbors which come after it in the
order. For that purpose, we first introduce an order in $V(G),$ so that the
paths of $R$ come first, one after another, each path $x_{0}x_{1}\cdots x_{k}$
in the order $x_{0},$ $x_{1},$ $\ldots,$ $x_{k},$ and then the vertices of
$V(G)\setminus V(R)$ in arbitrary order. We write $x\prec y$ if $x$ comes
before $y$. The \emph{tail} of an edge is its endvertex which comes first in
this order. Now, edges of $E(G)$ are ordered so that all the edges of $R$ come
before all the other edges, and within each of these two parts the edges are
ordered by their tails, the edges with the same tail being ordered
arbitrarily. Let $\nu(e)$ denote the number of $\mathcal{G}$-neighbors of an
edge $e$ which follow $e$ in this order, and we need to prove $\nu
(e)\leq5\Delta-9.$

For an edge $e=uv$ of $G,$ assume $u\prec v$ and denote $X(e)=\{u,v\}\cup
N_{R}(u)\cup N_{R}(v).$ Notice that each $\mathcal{G}$-neighbor of $e$ has one
endvertex in $X(e).$ Moreover, since $R$ is a $P_{\geq3}$-system, for $e\in R$
we have $\left\vert X(e)\right\vert \leq4$ and otherwise $\left\vert
X(e)\right\vert \leq6.$ We distinguish the following two cases.

\medskip

\noindent\textbf{Case 1:} $e=uv\in R$. In this case $X(e)$ contains $3$ or $4$
consecutive vertices of a path $P$ of $R$ containing $e.$ Notice that
$\left\vert N_{\mathcal{G}}(e)\right\vert \leq\left\vert X(e)\right\vert
\Delta-(\left\vert X(e)\right\vert -1)-1.$ Here we must subtract $(\left\vert
X(e)\right\vert -1)+1$ since edges of $P$ are counted twice in $\left\vert
X(e)\right\vert \Delta,$ while they should be counted once except for $e$
which should not be counted at all. If $\left\vert X(e)\right\vert =3,$ we
obtain $\nu(e)\leq\left\vert N_{\mathcal{G}}(e)\right\vert \leq3\Delta
-3\leq5\Delta-9$ for $\Delta\geq3.$ If $\left\vert X(e)\right\vert =4,$ then
$\left\vert N_{\mathcal{G}}(e)\right\vert \leq4\Delta-4.$ Yet, in this case
there is an edge $f\not =e$ of $P$ incident to $u$ which is counted in
$\left\vert N_{\mathcal{G}}(e)\right\vert .$ Since $f$ precedes $e,$ it is not
counted in $\nu(e),$ so $\nu(e)\leq\left\vert N_{\mathcal{G}}(e)\right\vert
-1=4\Delta-5\leq5\Delta-9$ for $\Delta\geq4.$

\medskip

\noindent\textbf{Case 2:} $e=uv\not \in R$. All $\mathcal{G}$-neighbors of $e$
are incident to a vertex of $X(e).$ Yet, if a vertex $x\in X(e)$ precedes $u$
in the vertex order, then all edges of $G$ incident to $x$ precede $e$ and are
not counted in $\nu(e).$ So, let us define $U=\{x\in X(e):u\preceq x\}$ and
notice that $\left\vert U\right\vert \leq5.$ Also, since $e\not \in R,$ all
edges of $R$ precede it and are not counted in $\nu(e).$ We conclude that
$\nu(e)\leq\sum_{x\in U}(\Delta-\deg_{R}x)-2,$ where we subtract $2$ since $e$
is counted twice in the sum and it should not be counted at all. Notice that
$\sum_{x\in U}\deg_{R}x\geq2(\left\vert U\right\vert -2).$ So, for $\left\vert
U\right\vert \leq4$ we obtain $\nu(e)\leq\left\vert U\right\vert
\Delta-2(\left\vert U\right\vert -2)-2\leq4\Delta-6\leq5\Delta-9$ for
$\Delta\geq3.$

It remains to consider the case $\left\vert U\right\vert =5.$ Since $R$ is a
$P_{\geq3}$-system, at least two vertices $x$ of $U$ must have $\deg_{R}%
x\geq2,$ while the remaining vertices have $\deg_{R}x\geq1.$ This implies
$\sum_{x\in U}\deg_{R}x\geq7,$ so $\nu(e)\leq5\Delta-7-2=5\Delta-9.$
\end{proof}

The assumption $\Delta\geq4$ in Lemma \ref{Lemma_linear} cannot be dropped,
since for a suitable $P_{\geq3}$-factor of the Petersen graph $\mathcal{G}$ is
not $6$-degenerate. Nevertheless, for $\Delta=3$ the desired bound
$\chi_{\mathrm{rn}}^{\prime}(G)\leq5\Delta-8=7$ is established in
\cite{PetrusevskiSkrekovski}.

\begin{proposition}
\label{Prop_sf}Let $G$ be a loopless multigraph with $\Delta=\Delta(G)\geq3$
which has a dominating $P_{\geq3}$-system. Then $\chi_{\mathrm{rn}}^{\prime
}(G)\leq5\Delta-8$.
\end{proposition}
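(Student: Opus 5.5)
The plan is to split according to the value of $\Delta$. For $\Delta\geq4$ we combine Lemmas \ref{Lemma_linear}, \ref{Lemma_conflict} and \ref{Lemma_service}. For $\Delta=3$ we invoke the result of \cite{PetrusevskiSkrekovski}.

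\textbf{Case $\Delta=3$.} Here $5\Delta-8=7$. Every subcubic multigraph satisfies $\chi_{\mathrm{rn}}^{\prime}(G)\leq7$ by \cite{PetrusevskiSkrekovski}, as recalled in the Introduction and in the remark after Lemma \ref{Lemma_linear}. So nothing more is needed, and the hypothesis on the $P_{\geq3}$-system is not even used.

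\textbf{Case $\Delta\geq4$.} Let $R$ be a dominating $P_{\geq3}$-system of $G$. We work with the whole multigraph $G$ at once rather than component by component, since neither Lemma \ref{Lemma_linear} nor Lemma \ref{Lemma_conflict} requires connectivity. The steps are as follows.
\begin{enumerate}
\item By Lemma \ref{Lemma_linear}, the auxiliary graph $\mathcal{G}$ built from $R$ is $(5\Delta-9)$-degenerate.
\item Greedy coloring along a degeneracy order therefore gives a proper vertex-coloring of $\mathcal{G}$ with at most $5\Delta-8$ colors.
\item By Lemma \ref{Lemma_conflict}, this is a proper edge-coloring of $G$ in which every edge of $R$ is rich.
\item Lemma \ref{Lemma_service} then applies. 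Every component of $R$ is a path on at least three vertices, so it has at least two edges and no component of $R$ is a single edge. The set $V(G)\setminus V(R)$ is independent because $R$ is dominating.
\end{enumerate}
Hence the coloring is a rich-neighbor coloring with at most $5\Delta-8$ colors.

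The only point requiring care is the case $\Delta=3$. Lemma \ref{Lemma_linear} fails there, as the Petersen graph example shows, so that case must rest entirely on the earlier result rather than on the degeneracy argument.
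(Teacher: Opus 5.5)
Your proof is correct and is essentially the paper's own argument. You settle $\Delta=3$ by the subcubic bound $\chi_{\mathrm{rn}}^{\prime}(G)\leq7$ of \cite{PetrusevskiSkrekovski}. For $\Delta\geq4$ you chain Lemma~\ref{Lemma_linear}, greedy coloring along a degeneracy order, Lemma~\ref{Lemma_conflict} and Lemma~\ref{Lemma_service}, and you verify the two hypotheses of the last lemma exactly as the paper does.
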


\begin{proof}
For $\Delta=3$ the bound to prove is $\chi_{\mathrm{rn}}^{\prime}(G)\leq7,$
and this holds for every subcubic multigraph \cite{PetrusevskiSkrekovski}. So
let $\Delta\geq4$ and let $R$ be a dominating $P_{\geq3}$-system of $G$. By
Lemma \ref{Lemma_linear} the graph $\mathcal{G}$ is $\left(  5\Delta-9\right)
$-degenerate, so it has a proper vertex-coloring $\varphi$ with at most
$5\Delta-8$ colors. By Lemma \ref{Lemma_conflict}, $\varphi$ is a proper
edge-coloring of $G$ in which every edge of $R$ is rich. Every component of
$R$ is a path with at least two edges, and $V(G)\setminus V(R)$ is
independent, so Lemma \ref{Lemma_service} implies that $\varphi$ is a
rich-neighbor coloring.
\end{proof}

It remains to characterize the multigraphs which have a dominating $P_{\geq3}%
$-system. We need several notions. A graph $F$ is \emph{factor-critical} if
$F-v$ has a perfect matching for every $v\in V(F)$. A \emph{sun} is a graph
obtained from a factor-critical graph $F$ on at least three vertices by
attaching a new pendant vertex $x^{\prime}$ to every vertex $x$ of $F$. The
graphs $K_{1}$ and $K_{2}$ are also regarded as suns. We call $F$ the
\emph{core} of the sun, and the pairs $xx^{\prime}$ its \emph{pendant pairs}.
The pendant pair of $K_{2}$ consists of its two vertices. For a graph $G$ we
denote by $\mathrm{sun}(G)$ the number of components of $G$ which are suns.
For a multigraph $G$ we denote by $k_{2}(G)$ the number of components of $G$
on exactly two vertices. Finally, a $P_{4}$-factor of $G$ is a set of
vertex-disjoint paths on four vertices which cover $V(G)$. A $\{P_{4},P_{5}%
\}$-factor is defined in the same way, with paths on four or five vertices.

Kaneko \cite{Kaneko} proved that a graph $G$ has a $P_{\geq3}$-factor if and
only if
\begin{equation}
\mathrm{sun}(G-T)\leq2\left\vert T\right\vert \qquad\text{for every
}T\subseteq V(G). \label{For_kaneko}%
\end{equation}
Our characterization has the same form, with the suns replaced by the
components on two vertices.

The next lemma, whose proof uses two constructions of Kano, Katona and
Kir\'{a}ly \cite{KanoKatonaKiraly}, shows how a sun can be covered by paths
after one or two of its vertices are removed.

\begin{lemma}
\label{Lemma_sun}Let $D$ be a sun distinct from $K_{1}$. Then the following holds.

\begin{itemize}
\item[(a)] If $vv^{\prime}$ is a pendant pair of $D,$ then $D-\{v,v^{\prime
}\}$ has a $P_{4}$-factor.

\item[(b)] If $\left\vert V(D)\right\vert >2$ and $v^{\prime}$ is a pendant
vertex of $D,$ then $D-v^{\prime}$ has a $\{P_{4},P_{5}\}$-factor.
\end{itemize}
\end{lemma}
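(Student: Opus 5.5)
Both parts reduce to the structure of a factor-critical core, so I will handle the degenerate suns first and then work with the core $F$. If $D=K_2$, then in (a) the graph $D-\{v,v'\}$ is empty and the empty set of paths is a $P_4$-factor; (b) does not apply. So assume $D$ is obtained from a factor-critical graph $F$ with $|V(F)|\geq 3$ by attaching a pendant vertex $x'$ to every $x\in V(F)$. Since $F$ is factor-critical, $|V(F)|$ is odd.

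For (a), let $vv'$ be the given pendant pair with $v\in V(F)$. Because $F$ is factor-critical, $F-v$ has a perfect matching $M$. For each edge $xy\in M$ we take the path $x'xyy'$, which is a $P_4$ in $D$. These paths are vertex-disjoint. They cover every vertex of $F-v$ together with its pendant vertex, which is exactly $V(D)\setminus\{v,v'\}$. So they form a $P_4$-factor of $D-\{v,v'\}$. This is the first construction of Kano, Katona and Kir\'{a}ly, and it needs nothing further.

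For (b), let $v'$ be the pendant vertex attached to $v\in V(F)$. Now $v$ must be absorbed into some path, and I will obtain a $P_5$ by extending one of the $P_4$'s from (a). Choose a neighbor $w$ of $v$ in $F$; one exists because a factor-critical graph on at least three vertices is connected. Take a perfect matching $M$ of $F-v$, and let $wz\in M$ be the edge covering $w$. The key point is that the path $x'xyy'$ for $xy=wz$ can be rerouted as $v\,w\,w'$ followed by $z\,z'$. That sequence is not a path, so I would instead use the $P_5$
\[
w'\,w\,v \quad\text{extended to}\quad z'\,z\,w\,v\ \text{and}\ w',
\]
which fails as written, because $w$ would have degree three.

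The correct fix is to choose the matching so that it is adapted to $w$. Apply factor-criticality at $w$ rather than at $v$: $F-w$ has a perfect matching $M'$, and some edge $vu\in M'$ covers $v$. Use the path $w'\,w\,v\,u\,u'$, which is a $P_5$ because $wv\in E(F)$. Every other edge $xy\in M'$ gives the $P_4$ $x'xyy'$. Together these cover $V(F)$ and all pendant vertices except $v'$. That is exactly $V(D)\setminus\{v'\}$, so they form a $\{P_4,P_5\}$-factor.

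The main obstacle is this choice in (b): one must use the perfect matching of $F-w$ for a neighbor $w$ of $v$, not of $F-v$. Once that is chosen, the remaining verification (disjointness and coverage) is routine.
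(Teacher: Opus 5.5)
Your final argument is correct and is the paper's proof: in (a) you turn a perfect matching of $F-v$ into paths $x'xyy'$, and in (b) you take a perfect matching of $F-w$ for a neighbor $w$ of $v$, form the $P_5$ $w'wvuu'$ from the matching edge $vu$, and use $P_4$'s $x'xyy'$ for the remaining matching edges. Delete the abandoned first attempt in (b), the rerouting based on a matching of $F-v$, since it is not a valid construction as written and only obscures the proof.
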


\begin{proof}
For $D=K_{2}$ the assertion (a) holds with the empty factor, so let $D$ have a
core $F$ on at least three vertices, and let $x^{\prime}$ denote the pendant
vertex attached to $x\in V(F)$.

$(a)$ Let $M$ be a perfect matching of $F-v,$ which exists because $F$ is
factor-critical. Replacing every edge $xy$ of $M$ by the path $x^{\prime
}xyy^{\prime}$ we obtain vertex-disjoint paths on four vertices which cover
$V(D)\setminus\{v,v^{\prime}\}$.$\bigskip$

$(b)$ Let $v$ be the vertex of $F$ to which $v^{\prime}$ is attached, let $x$
be a neighbor of $v$ in $F$ and let $M$ be a perfect matching of $F-x$. Then
$M$ covers $v,$ say $vy\in M$. We take the path $y^{\prime}yvxx^{\prime}$ on
five vertices, and we replace every edge $ab$ of $M$ other than $vy$ by the
path $a^{\prime}abb^{\prime}$. These paths are vertex-disjoint and they cover
$V(D)\setminus\{v^{\prime}\}$.
\end{proof}

Let us now characterize the multigraphs with a dominating $P_{\geq3}$-system.

\begin{theorem}
\label{Tm_char}A loopless multigraph $G$ has a dominating $P_{\geq3}$-system
if and only if
\begin{equation}
k_{2}(G-T)\leq2\left\vert T\right\vert \qquad\text{for every }T\subseteq V(G).
\label{For_char}%
\end{equation}

\end{theorem}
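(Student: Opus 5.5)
We prove the two directions separately. Multiple edges play no role: a $P_{\geq3}$-system uses no parallel edges, and independence of $V(G)\setminus V(R)$ depends only on the underlying simple graph. So we may pass to the underlying graph and use the result of Kaneko (\ref{For_kaneko}) and Lemma \ref{Lemma_sun} freely.

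\emph{Necessity.} Let $R$ be a dominating $P_{\geq3}$-system and let $T\subseteq V(G)$. Consider a component $C=\{a,b\}$ of $G-T$ on two vertices.
\begin{itemize}
\item[(i)] Since $ab\in E(G)$ and $V(G)\setminus V(R)$ is independent, one of $a,b$, say $a$, lies on a path $P$ of $R$.
\item[(ii)] The path $P$ has at least three vertices, so it cannot stay inside $C$. Hence $P$ contains an edge of $R$ joining $C$ to $T$.
\item[(iii)] Distinct components $C$ yield distinct such edges.
\item[(iv)] Each $t\in T$ satisfies $\deg_R(t)\leq2$, so there are at most $2|T|$ edges of $R$ incident to $T$.
\end{itemize}
Together these give $k_2(G-T)\leq2|T|$, which is (\ref{For_char}).

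\emph{Sufficiency.} Assume (\ref{For_char}). Among all $T\subseteq V(G)$, choose one maximizing $\mathrm{sun}(G-T)-2|T|$, and among those a largest one (a Gallai--Edmonds type barrier). The plan has four steps.
\begin{itemize}
\item[(1)] Components of $G-T$ that are not suns. Maximality of $T$ should imply that each of them satisfies Kaneko's condition. Indeed, a violating set inside such a component could be added to $T$ without decreasing the deficiency, which contradicts the choice of $T$. Hence each of these components has a $P_{\geq3}$-factor.
\item[(2)] Assignment of sun components to $T$. We assign sun components to vertices of $T$ so that every $t\in T$ receives exactly two suns and every $K_2$-component is assigned. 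Set this up as a bipartite $b$-matching problem between $T$ (capacity $2$) and the sun components. To saturate the $K_2$-components, apply (\ref{For_char}) to $T'=N(\mathcal{K})\cap T$ for any family $\mathcal{K}$ of $K_2$-components. Since the members of $\mathcal{K}$ remain components of $G-T'$, Hall's condition holds. To saturate $T$, use the maximality and largest choice of $T$: if some $S\subseteq T$ had fewer than $2|S|$ suns available, then removing $S$ from $T$ would not decrease the deficiency, contradicting the choice of $T$. Finally, combine the two one-sided saturations into a single $b$-matching by the standard Mendelsohn--Dulmage exchange argument.
\item[(3)] Construction of the paths. For $t$ with assigned suns $D_1,D_2$, build the path $x_1\,t\,x_2$, where $x_i$ is chosen as follows.
\begin{itemize}
\item[$\bullet$] If $D_i=K_1$, then $x_i$ is its vertex.
\item[$\bullet$] If $D_i=K_2$, then $x_i$ is one of its vertices adjacent to $t$, and the other vertex is absorbed into the path. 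The remainder of $D_i$ is then empty, consistent with Lemma \ref{Lemma_sun}(a).
\item[$\bullet$] If $D_i$ is a larger sun, attach at a vertex $x_i$ of a pendant pair $xx'$ adjacent to $t$ (extending through the pair where necessary). Cover the rest by Lemma \ref{Lemma_sun}(a), or by (b) when $t$ attaches to a pendant vertex.
\end{itemize}
This produces paths on at least three vertices covering $T$ and all assigned suns.
\item[(4)] Unassigned suns. There are no unassigned $K_2$'s by step (2). An unassigned $K_1$ is left uncovered; its neighbours lie in $T$, which is covered. For an unassigned larger sun, leave one pendant vertex $v'$ uncovered and cover $D-v'$ by Lemma \ref{Lemma_sun}(b). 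The neighbour of $v'$ in $D$ is covered, and its remaining neighbours lie in $T$. Thus the uncovered vertices are pairwise nonadjacent, because they lie in different components of $G-T$ and all have their neighbours covered.
\end{itemize}

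The main obstacle is step (2): obtaining simultaneously that every vertex of $T$ gets exactly two suns and that all $K_2$-components are used. This is where the extremal choice of $T$ and hypothesis (\ref{For_char}) interact. The first thing to try is the deficiency and exchange argument above. If it fails, the fallback is to induct on $|V(G)|$ by deleting a vertex $t\in T$ together with two attached suns, and to check that (\ref{For_char}) is inherited by what remains.
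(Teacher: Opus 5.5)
Your proposal is correct and follows the paper's proof essentially step by step: the same necessity count, the same extremal $T$ maximizing $\mathrm{sun}(G-T)-2|T|$, the same two Hall checks merged into one matching that saturates both $T$ (with capacity two) and the $K_2$-components, and the same path construction via Lemma~\ref{Lemma_sun}. The differences are cosmetic. Your tie-break toward a largest $T$ is superfluous, since in both steps (1) and (2) the deficiency changes strictly; in step (2), ``would not decrease'' must be read as ``would strictly increase'' (as $f(T\setminus S)\geq f(T)+2|S|-|N(S)|$ shows), because $T\setminus S$ is smaller and so a tie would not contradict your choice. Also, you cite Mendelsohn--Dulmage where the paper instead augments a $\mathcal{K}_2$-saturating matching to a maximum one.
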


\begin{proof}
Assume first that $G$ has a dominating $P_{\geq3}$-system $R.$ Let $T\subseteq
V(G)$ and let $C$ be a component of $G-T$ on two vertices. The two vertices of
$C$ are adjacent, since $C$ is connected. The set $V(G)\setminus V(R)$ is
independent, so at least one vertex $x$ of $C$ belongs to $V(R).$ Let $P$ be
the path of $R$ which contains $x.$ Since $P$ has at least three vertices, it
is not contained in $C.$ So, $P$ contains an edge of $R$ which connects a
vertex of $C$ and a vertex of $T.$ Distinct components of $G-T$ give distinct
such edges, which implies%
\[
k_{2}(G-T)\leq\sum_{v\in T}\deg_{R}(v)\leq2\left\vert T\right\vert ,
\]
where the second inequality holds since $\deg_{R}(v)\leq2$ for every
$v.\smallskip$

Assume now that (\ref{For_char}) holds. We will use Kaneko's theorem, which is
established only for graphs, so let us first reduce the problem to graphs. A
dominating $P_{\geq3}$-system of a multigraph $G$ contains no parallel edges.
So, it is also a dominating $P_{\geq3}$-system of the underlying simple graph
of $G,$ and vice versa. Also, $k_{2}(G-T)$ has the same value in $G$ and in
its underlying simple graph, so (\ref{For_char}) holds for $G$ if and only if
it holds for its underlying simple graph. Hence, it suffices to prove the
theorem for graphs.\begin{figure}[h]
\begin{center}
\includegraphics[scale=1.05]{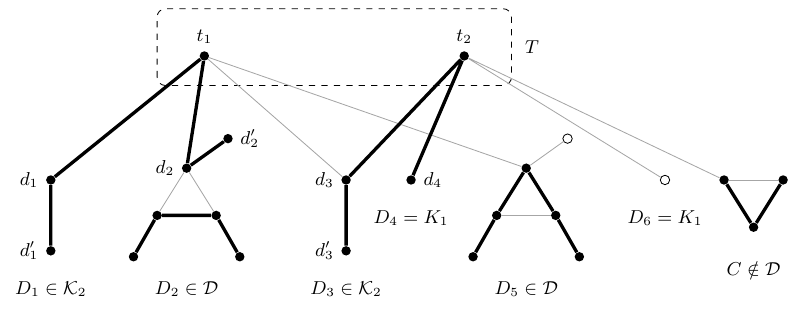}
\end{center}
\caption{An illustration of the construction in the proof of Theorem
\ref{Tm_char} with $T=\{t_{1},t_{2}\}$. Bold edges belong to $R$. The two
copies of $t_{1}$ are matched to the suns $D_{1}$ and $D_{2},$ which gives the
path $d_{1}^{\prime}d_{1}t_{1}d_{2}d_{2}^{\prime},$ and the rest of $D_{2}$ is
covered by a $P_{4}$-factor by Lemma \ref{Lemma_sun}(a). The two copies of
$t_{2}$ are matched to $D_{3}$ and $D_{4}=K_{1},$ which gives the path
$d_{3}^{\prime}d_{3}t_{2}d_{4}$. In the unmatched sun $D_{5}$ one pendant
vertex is left uncovered and the rest is covered by a path on five vertices by
Lemma \ref{Lemma_sun}(b), the unmatched sun $D_{6}=K_{1}$ is left uncovered,
and the component $C,$ which is not a sun, is covered by a $P_{\geq3}%
$-factor.}%
\label{Fig_char}%
\end{figure}

So, assume that $G$ is a graph. If (\ref{For_kaneko}) holds for $G,$ then $G$
has a $P_{\geq3}$-factor. A $P_{\geq3}$-factor is a dominating $P_{\geq3}%
$-system, so we are done. Otherwise, there exists $T\subseteq V(G)$ with
$f(T)=\mathrm{sun}(G-T)-2\left\vert T\right\vert >0,$ and we choose such $T$
so that $f(T)$ is maximum. We will construct a dominating $P_{\geq3}$-system
$R$ which covers every vertex of $T$ and leaves at most one vertex of each
component of $G-T$ uncovered. There are no edges between distinct components
of $G-T,$ so the uncovered vertices form an independent set, i.e. such $R$ is
dominating. The construction which follows is illustrated in Figure
\ref{Fig_char}.

Notice that a component of $G-T$ is either not a sun, or it is a sun, in which
case it is $K_{1},$ $K_{2},$ or a sun with a core on at least three vertices.
First, let us show that every component $C$ of $G-T$ which is not a sun has a
$P_{\geq3}$-factor. Let $X\subseteq V(C).$ Removing $X$ from $G-T$ affects
only the component $C,$ and $C$ is not a sun, so
\[
f(T\cup X)=\mathrm{sun}(G-T)+\mathrm{sun}(C-X)-2\left\vert T\right\vert
-2\left\vert X\right\vert .
\]
By the maximality of $T$ we have $f(T)-f(T\cup X)\geq0,$ i.e. $\mathrm{sun}%
(C-X)\leq2\left\vert X\right\vert .$ Since this holds for every $X\subseteq
V(C),$ the condition (\ref{For_kaneko}) holds for $C,$ so $C$ has a $P_{\geq
3}$-factor.

Next, we wish to assign two suns of $G-T$ to every $t\in T,$ so that no sun is
assigned twice and every component $K_{2}$ of $G-T$ is assigned to some $t.$
For that purpose, we define a bipartite graph $B$ as follows. One partite set
of $B$ is the set $\mathcal{T}$ which contains two copies of every $t\in T.$
The other partite set is the set $\mathcal{D}$ of all components of $G-T$
which are suns. A copy of $t$ is adjacent in $B$ to $D\in\mathcal{D}$ if and
only if $t$ has a neighbor in $D$ in the graph $G.$ Let $\mathcal{K}%
_{2}\subseteq\mathcal{D}$ be the set of all suns $D=K_{2}.$ The assignment we
aim at corresponds to a matching $M$ in $B$ which saturates both $\mathcal{T}$
and $\mathcal{K}_{2}.$ A necessary condition for the existence of such $M$ is
$\left\vert \mathcal{K}_{2}\right\vert \leq\left\vert \mathcal{T}\right\vert
\leq\left\vert \mathcal{D}\right\vert ,$ i.e. $k_{2}(G-T)\leq2\left\vert
T\right\vert \leq\mathrm{sun}(G-T).$ This condition holds, since the first
inequality is (\ref{For_char}), and the second one follows from $f(T)>0.$

In order to prove that such $M$ exists, we use Hall's theorem, which states
that a bipartite graph with partite sets $P$ and $Q$ has a matching which
saturates $P$ if and only if $\left\vert N(W)\right\vert \geq\left\vert
W\right\vert $ for every $W\subseteq P$. Let us first verify this condition
for $P=\mathcal{T}.$ Let $W\subseteq\mathcal{T},$ and let $X\subseteq T$ be
the set of vertices which have a copy in $W.$ Then $\left\vert W\right\vert
\leq2\left\vert X\right\vert .$ Every sun of $\mathcal{D}\setminus N_{B}(W)$
remains a sun in $G-(T\setminus X)$. This implies
\[
f(T\setminus X)=\mathrm{sun}(G-(T\setminus X))-2\left\vert T\setminus
X\right\vert \geq\mathrm{sun}(G-T)-\left\vert N_{B}(W)\right\vert -2\left\vert
T\right\vert +2\left\vert X\right\vert .
\]
By the maximality of $T$ we have $f(T)-f(T\setminus X)\geq0,$ so $\left\vert
N_{B}(W)\right\vert \geq2\left\vert X\right\vert \geq\left\vert W\right\vert
.$ Hence, $B$ has a matching which saturates $\mathcal{T}.$

Let us now verify Hall's condition for $P=\mathcal{K}_{2}.$ Let $W\subseteq
\mathcal{K}_{2},$ and let $X\subseteq T$ be the set of vertices which have a
neighbor in some sun of $W.$ Then $N_{B}(W)$ consists of both copies of
the vertices of $X,$ so $\left\vert N_{B}(W)\right\vert =2\left\vert
X\right\vert .$ Every sun of $W$ is a component of $G-X,$ so (\ref{For_char})
applied to $X$ implies
\[
\left\vert W\right\vert \leq k_{2}(G-X)\leq2\left\vert X\right\vert
=\left\vert N_{B}(W)\right\vert .
\]
Hence, $B$ has a matching which saturates $\mathcal{K}_{2}.$

It remains to establish that there is a matching $M$ which saturates
$\mathcal{T}$ and $\mathcal{K}_{2}$ simultaneously. For that purpose, take a
matching of $B$ which saturates $\mathcal{K}_{2},$ and augment it until it is
maximum. Augmenting keeps every matched vertex matched, so the obtained
maximum matching $M$ still saturates $\mathcal{K}_{2}.$ Since some matching
saturates $\mathcal{T},$ the matching $M$ has $\left\vert \mathcal{T}%
\right\vert $ edges, so it saturates $\mathcal{T}$ as well.

We are now in a position to construct the dominating $P_{\geq3}$-system $R.$
Let $t\in T,$ and let $D_{1}$ and $D_{2}$ be the suns matched by $M$ to the
two copies of $t.$ For $j=1,2,$ let $d_{j}$ be a neighbor of $t$ in $D_{j},$
and if $D_{j}\not =K_{1},$ let $d_{j}^{\prime}$ be the vertex which forms a
pendant pair with $d_{j}.$ We take the path $d_{1}^{\prime}d_{1}td_{2}%
d_{2}^{\prime},$ where $d_{j}^{\prime}$ is omitted if $D_{j}=K_{1}.$ This path
has three, four or five vertices. The remaining vertices of $D_{1}$ and
$D_{2}$ are covered by a $P_{4}$-factor by Lemma \ref{Lemma_sun}(a). So, all
vertices of the suns matched by $M$ are covered.

Consider now a sun $D$ which is not matched by $M.$ Since $M$ saturates
$\mathcal{K}_{2},$ the sun $D$ is either $K_{1}$ or it has a core on at least
three vertices. If $D=K_{1},$ we leave its vertex uncovered. Otherwise, we
leave one pendant vertex of $D$ uncovered, and we cover the remaining vertices
of $D$ by a $\{P_{4},P_{5}\}$-factor by Lemma \ref{Lemma_sun}(b). Finally, we
cover every component of $G-T$ which is not a sun by a $P_{\geq3}$-factor.

Let $R$ be the union of the edge sets of all the obtained paths. Every sun is
matched by $M$ to at most one copy of a vertex of $T,$ so these paths are
vertex-disjoint. Each of them has at least three vertices, so $R$ is a
$P_{\geq3}$-system. Every $t\in T$ is covered by $R,$ and every component of
$G-T$ contains at most one vertex not covered by $R.$ As noticed above, this
implies that $R$ is a dominating $P_{\geq3}$-system of $G.$
\end{proof}

\noindent Not every multigraph has a dominating $P_{\geq3}$-system. Let $G$ be
the spider obtained from $K_{1,3}$ by subdividing every edge once, and let $w$
be its center. Then $G-w$ consists of three components on two vertices, so
(\ref{For_char}) fails for $T=\{w\}$. The same holds whenever a multigraph has
a vertex $w$ and three paths $w\,a_{i}\,b_{i}$ with $d(a_{i})=2$ and
$d(b_{i})=1$. Since the rest of such a multigraph is arbitrary, examples exist
for every $\Delta\geq3$.

Next we show that the condition (\ref{For_char}) holds whenever the maximum
degree of $G$ is not much larger than its minimum degree.

\begin{corollary}
\label{Cor_degree}Let $G$ be a graph with $\delta=\delta(G)\geq2$ and
$\Delta=\Delta(G)$. If $\Delta\leq4\delta-4,$ then $G$ has a dominating
$P_{\geq3}$-system.
\end{corollary}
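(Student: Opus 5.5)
By Theorem \ref{Tm_char}, it suffices to verify (\ref{For_char}): $k_{2}(G-T)\leq2|T|$ for every $T\subseteq V(G)$. I will use a double-counting argument on the edges between $T$ and the two-vertex components of $G-T$.

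Fix $T$ and let $C_{1},\ldots,C_{k}$ be the components of $G-T$ on two vertices, where $k=k_{2}(G-T)$. Since $G$ is a graph, the two vertices of each $C_{i}$ are joined by exactly one edge and have no other neighbors outside $T$. Each vertex of $C_{i}$ has degree at least $\delta$, so it sends at least $\delta-1$ edges to $T$. Hence $C_{i}$ sends at least $2\delta-2$ edges to $T$.

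Every vertex of $T$ has at most $\Delta$ incident edges, so the number of edges between $T$ and these components is at most $\Delta|T|$. This gives
\[
k(2\delta-2)\leq\Delta|T|\leq(4\delta-4)|T|.
\]
Since $\delta\geq2$, the factor $2\delta-2$ is positive, and dividing by it yields $k\leq2|T|$. The case $T=\emptyset$ is covered by the same inequality: a two-vertex component of $G$ would contain vertices of degree $1<\delta$, so $k=0$.

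There is no real obstacle. The points that need care are that simplicity of $G$ is used to count exactly $\delta-1$ edges to $T$ per vertex, and that $\delta\geq2$ is needed to make the divisor positive. Theorem \ref{Tm_char} then gives the dominating $P_{\geq3}$-system.
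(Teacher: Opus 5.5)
Your proof is correct and is essentially the paper's argument: double count the edges between $T$ and the vertices of the two-vertex components of $G-T$ to obtain $2(\delta-1)\,k_{2}(G-T)\leq\Delta|T|\leq(4\delta-4)|T|$, divide by $2\delta-2>0$, and apply Theorem~\ref{Tm_char}. One wording slip in your last paragraph: simplicity gives \emph{at least} $\delta-1$ edges to $T$ per vertex, not exactly $\delta-1$, which is what you correctly used earlier.
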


\begin{proof}
Let $T\subseteq V(G),$ and let $U$ be the set of the vertices of the
components of $G-T$ on two vertices. Then $\left\vert U\right\vert
=2k_{2}(G-T)$. A vertex $u\in U$ has exactly one neighbor outside $T,$ namely
the other vertex of its component. Since $d(u)\geq\delta,$ at least $\delta-1$
edges join $u$ to $T$. On the other hand, at most $\Delta\left\vert
T\right\vert $ edges are incident to vertices of $T$. Hence
\[
2\left(  \delta-1\right)  k_{2}(G-T)\leq\Delta\left\vert T\right\vert .
\]
Since $\delta\geq2$ and $\Delta\leq4\delta-4,$ this gives
\[
k_{2}(G-T)\leq\frac{\Delta}{2\left(  \delta-1\right)  }\left\vert T\right\vert
\leq2\left\vert T\right\vert .
\]
So (\ref{For_char}) holds, and Theorem \ref{Tm_char} gives the assertion.
\end{proof}

\noindent Proposition \ref{Prop_sf} now further implies that every graph $G$
with $3\leq\Delta\leq4\delta-4$ satisfies the bound $\chi_{\mathrm{rn}%
}^{\prime}(G)\leq5\Delta-8$. The regular case is now a simple consequence.

\begin{corollary}
\label{Cor_reg}Every $\Delta$-regular graph $G$ with $\Delta\geq3$ satisfies
\[
\chi_{\mathrm{rn}}^{\prime}(G)\leq5\Delta-8.
\]

\end{corollary}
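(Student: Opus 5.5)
The plan is to deduce the statement directly from Corollary \ref{Cor_degree} and Proposition \ref{Prop_sf}. Let $G$ be $\Delta$-regular with $\Delta\geq3$. Then $\delta(G)=\Delta(G)=\Delta$, so $\delta\geq3\geq2$. The condition $\Delta\leq4\delta-4$ becomes $\Delta\leq4\Delta-4$, which is equivalent to $3\Delta\geq4$. This holds for every $\Delta\geq2$, and in particular for $\Delta\geq3$.

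By Corollary \ref{Cor_degree}, $G$ therefore has a dominating $P_{\geq3}$-system. Since $G$ is a graph (hence a loopless multigraph) with $\Delta\geq3$, Proposition \ref{Prop_sf} applies and gives $\chi_{\mathrm{rn}}^{\prime}(G)\leq5\Delta-8$. Connectivity is never needed: Corollary \ref{Cor_degree} and Theorem \ref{Tm_char} are stated for arbitrary graphs, and Proposition \ref{Prop_sf} treats $G$ as a whole.

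There is essentially no obstacle here; the only thing to check is that the hypotheses line up. First, $\delta\geq2$ is needed in Corollary \ref{Cor_degree} so that one may divide by $\delta-1$, and regularity with $\Delta\geq3$ guarantees it. Second, the case $\Delta=3$ is covered inside Proposition \ref{Prop_sf} by the subcubic result of \cite{PetrusevskiSkrekovski}, so it needs no separate argument. The restriction to graphs rather than multigraphs comes from Corollary \ref{Cor_degree}, whose counting argument assumes that a vertex in a two-vertex component has exactly one edge to its partner; this matches the hypothesis of the corollary.
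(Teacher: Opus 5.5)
Your proposal is correct and follows the paper's proof. Regularity gives $\delta(G)=\Delta\geq2$ and $\Delta\leq4\Delta-4$, so Corollary~\ref{Cor_degree} yields a dominating $P_{\geq3}$-system, and Proposition~\ref{Prop_sf} (which covers $\Delta=3$ through the subcubic bound of $7$) then gives $\chi_{\mathrm{rn}}^{\prime}(G)\leq5\Delta-8$.
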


\begin{proof}
A $\Delta$-regular graph satisfies $\delta(G)=\Delta\geq2$. Moreover,
$\Delta\leq4\Delta-4$ holds for every $\Delta\geq2$. So $G$ has a dominating
$P_{\geq3}$-system by Corollary \ref{Cor_degree}, and Proposition
\ref{Prop_sf} gives the bound.
\end{proof}

\noindent The condition $\Delta\leq4\delta-4$ in Corollary \ref{Cor_degree}
cannot be relaxed to $\Delta\leq4\delta-3$. Indeed, let $G$ consist of five
internally disjoint paths $a\,x_{i}\,y_{i}\,b,$ $1\leq i\leq5$. Then
$\delta(G)=2$ and $\Delta(G)=5=4\delta(G)-3$. Removing $T=\{a,b\}$ leaves the
five components $x_{i}y_{i}$ on two vertices, so $k_{2}(G-T)=5>4=2\left\vert
T\right\vert $. Hence, by Theorem \ref{Tm_char}, $G$ has no dominating
$P_{\geq3}$-system.

\section{Concluding remarks}

We have shown that the rich-neighbor chromatic index is bounded by a linear
function of the maximum degree. Every loopless multigraph $G$ with $\Delta
\geq3$ satisfies $\chi_{\mathrm{rn}}^{\prime}(G)\leq6\Delta-8,$ and every
$\Delta$-regular graph with $\Delta\geq3$ satisfies $\chi_{\mathrm{rn}%
}^{\prime}(G)\leq5\Delta-8$. The multigraphs with $\Delta\leq2$ need at most
four colors. It remains open how far these bounds can be pushed down by
refining the approach presented here or by entirely different techniques. Our
study of the problem leads us to believe that the tight upper bound for graphs
is considerably lower, as stated in the conjecture below.

Let us call a connected graph \emph{exceptional} if it is one of the five
graphs $C_{4},$ $C_{7},$ $K_{4},$ $K_{2,2,2}$ and $K_{6}$. Each of them is
regular and satisfies $\chi_{\mathrm{rn}}^{\prime}(G)=2\Delta,$ which exceeds
the lower bound of Observation \ref{Prop_lower} by one. We know of no further
graph which needs more than $2\Delta-1$ colors, which leads us to the
following conjecture.

\begin{conjecture}
\label{Con_exceptional}Every connected graph $G$ with $\Delta=\Delta(G)\geq2$
which is not exceptional satisfies
\[
\chi_{\mathrm{rn}}^{\prime}(G)\leq2\Delta-1.
\]

\end{conjecture}

By Observation \ref{Prop_lower}, the bound of Conjecture \ref{Con_exceptional}
would be best possible. For $\Delta=2$ the conjecture holds by Proposition
\ref{Prop_small}. For $\Delta=3$ it states that every connected graph with
$\Delta=3$ other than $K_{4}$ satisfies $\chi_{\mathrm{rn}}^{\prime}(G)\leq5,$
which is Conjecture 1.2 of \cite{PetrusevskiSkrekovski} restricted to graphs,
as there it is stated for connected subcubic loopless multigraphs. Thus
Conjecture \ref{Con_exceptional} extends that conjecture to an arbitrary
maximum degree.

\bigskip

\bigskip\noindent\textbf{Acknowledgments.}~~The authors acknowledge the
partial support by Keio University SPRING scholarship JPMJSP2123 and JST ERATO
JPMJER2301, by the Slovenian Research and Innovation Agency program P1--0383,
project J1--3002 and the annual work program of Rudolfovo, by Project
KK.01.1.1.02.0027 co-financed by the European Regional Development Fund, by
the NextGeneration EU foundation via IP--UNIST--17 (GEORAZ), and by the
bilateral Slovenian--Croatian project BI--HR/25--27--004.

\vspace{2mm}\bigskip\noindent\textbf{AI declaration.}~~Anthropic Claude.ai was
used in the preparation of this manuscript, for assistance with the arguments
and their exposition, and for verification of the proofs. All the results were
verified by the authors, who take full responsibility for the content.

\end{document}